\newcommand{\ZZ}{\mathbb Z}
\newcommand{\NN}{\mathbb N}
\newtheorem{theorem}{Theorem}[section]
\newtheorem{lemma}[theorem]{Lemma}
\newtheorem{corollary}[theorem]{Corollary}
\newtheorem{proposition}[theorem]{Proposition}
\newtheorem{conjecture}[theorem]{Conjecture}
\theoremstyle{definition}
\newtheorem{prob}[theorem]{Problem}
\newcommand{\Zmn}{\ZZ/m\ZZ \times \ZZ/n\ZZ}
\newcommand{\Znn}{(\ZZ/n\ZZ)^2}
\newcommand{\Znd}{(\ZZ/n\ZZ)^d}
\title{Modified Erd\"os--Ginzburg--Ziv constants for $\ZZ/n\ZZ$ and $\Znn$}
\author[Aaron Berger]{Aaron Berger}
\author[Danielle Wang]{Danielle Wang}
\address{Department of Mathematics, Massachusetts Institute of Technology, Cambridge, MA 02139-4307}
\email{bergera@mit.edu}
\email{diwang@mit.edu}
\begin{document}
\begin{abstract}
	For an abelian group $G$ and an integer $t > 0$, the \emph{modified Erd\"os--Ginzburg--Ziv constant}
	$s_t'(G)$ is the smallest integer $\ell$ such that
	any zero-sum sequence of length at least $\ell$ with
	elements in $G$ contains a zero-sum subsequence (not necessarily consecutive)
	of length $t$. We compute $s_t'(G)$ for
	$G = \ZZ/n\ZZ$ and for $t = n$, $G = \Znn$.
	\\\\
	\textbf{Keywords:} Zero-sum sequence, Zero-sum subsequence, Erd\"os--Ginzburg--Ziv Constant. 
\end{abstract}

\maketitle

\section{Introduction}
In 1961, Erd\"os, Ginzburg, and Ziv proved the 
following classical theorem.

\begin{theorem}[Erd\"os--Ginzburg--Ziv \cite{erdos1961theorem}]
\label{thm:egz}
Any sequence of length $2n - 1$ in $\ZZ/n\ZZ$ contains
a zero-sum subsequence of length $n$.
\end{theorem}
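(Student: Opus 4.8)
The plan is to first reduce to the case where $n = p$ is prime, and then dispatch the prime case directly via a counting argument.

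For the reduction I would establish multiplicativity: if the conclusion holds for $m$ and for $n$, it holds for $mn$. Given a sequence of length $2mn - 1$ in $\ZZ/mn\ZZ$, view it modulo $n$ and repeatedly extract disjoint zero-sum (mod $n$) subsequences of length $n$. As long as at least $2n - 1$ elements remain the case-$n$ hypothesis supplies another such block, so from $2mn - 1$ elements I can peel off $2m - 1$ disjoint blocks $B_1, \dots, B_{2m-1}$, each of length $n$ and with sum $s_j := \sum_{a \in B_j} a$ divisible by $n$. The values $s_j/n$ form a sequence of length $2m - 1$ in $\ZZ/m\ZZ$, so the case-$m$ hypothesis yields an index set $J$ with $|J| = m$ and $\sum_{j \in J} s_j/n \equiv 0 \pmod m$. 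Since the blocks are disjoint, $\bigcup_{j \in J} B_j$ has length $mn$ and sum $\sum_{j \in J} s_j = n\sum_{j \in J}(s_j/n) \equiv 0 \pmod{mn}$. Induction on the number of prime factors of $n$ then reduces everything to the prime case.

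For the prime case I would invoke the Chevalley--Warning theorem. Given $a_1, \dots, a_{2p-1} \in \FF_p$, consider the two polynomials
\[
f_1 = \sum_{i=1}^{2p-1} a_i x_i^{p-1}, \qquad f_2 = \sum_{i=1}^{2p-1} x_i^{p-1}
\]
in $2p - 1$ variables over $\FF_p$. Their degrees sum to $2(p-1) = 2p - 2$, strictly less than the number of variables $2p - 1$, so Chevalley--Warning guarantees that the number of common zeros is divisible by $p$. The all-zero tuple is one such zero, so there is a nonzero common zero $(x_i)$. Setting $I = \{i : x_i \neq 0\}$ and using $x_i^{p-1} = 1$ for $i \in I$ by Fermat, the vanishing of $f_2$ gives $|I| \equiv 0 \pmod p$; since $1 \le |I| \le 2p - 1$ we conclude $|I| = p$, and the vanishing of $f_1$ then says $\sum_{i \in I} a_i \equiv 0 \pmod p$. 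Thus $(a_i)_{i \in I}$ is a zero-sum subsequence of length $p$.

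The main obstacle is less the prime case---essentially immediate once the right polynomial system is written down---than making the multiplicativity reduction fully rigorous, in particular verifying that exactly $2m - 1$ disjoint length-$n$ blocks can always be extracted from $2mn - 1$ elements and that the index sets tracked through the two applications of the inductive hypothesis remain disjoint. An alternative to Chevalley--Warning for the prime case is the Cauchy--Davenport inequality: after sorting the $a_i$, either $p$ of them coincide (giving a trivial zero-sum block) or the $p-1$ two-element sets $\{a_i, a_{i+p-1}\}$ have sumset equal to all of $\FF_p$, since iterating $|A + B| \ge \min(p, |A| + |B| - 1)$ gives $\min(p, 2(p-1) - (p-2)) = p$; choosing one representative from each set to hit $-a_{2p-1}$ again produces a length-$p$ zero-sum subsequence.
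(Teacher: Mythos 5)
The paper offers no proof of this statement: it is the classical Erd\"os--Ginzburg--Ziv theorem, imported from \cite{erdos1961theorem} and used as a black box (it is the main input to Proposition \ref{prop:cyclicupper} and Corollary \ref{cor:cyclicupper}). So there is nothing in the paper to compare against line by line; your proposal is a genuine proof, and it is correct. The prime case via Chevalley--Warning is the standard argument and is airtight: the degrees of $f_1$ and $f_2$ sum to $2p-2 < 2p-1$, the origin is a common zero, hence a nonzero common zero exists, and Fermat's little theorem converts the vanishing of $f_2$ into $|I| \equiv 0 \pmod p$, forcing $|I| = p$, while the vanishing of $f_1$ gives the zero-sum condition. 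The multiplicativity step you flag as the main obstacle also goes through without trouble: after extracting $j \le 2m-2$ disjoint blocks, $2mn - 1 - jn \ge 2n - 1$ elements remain, so the case-$n$ hypothesis applies and exactly $2m-1$ blocks can be peeled off; since each block is chosen from the elements not yet used, disjointness is automatic, and the union over the index set $J$ is a legitimate subsequence. The one point worth stating carefully is why $s_j/n$ makes sense: the sum of each block lies in the subgroup $n(\ZZ/mn\ZZ) \cong \ZZ/m\ZZ$ of $\ZZ/mn\ZZ$, and division by $n$ is precisely this isomorphism. Finally, note that your block-peeling reduction is exactly the technique this paper itself uses---in Proposition \ref{prop:cyclicupper}, Lemma \ref{lem:3n}, and Proposition \ref{prop:upperbound}---to pass from the prime or divisor case to general $n$; the paper simply runs it with Theorem \ref{thm:egz} as input rather than inside a proof of Theorem \ref{thm:egz}. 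Your Cauchy--Davenport alternative for the prime case is also correct and equally standard.
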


Here, a subsequence need not be consecutive, and
a sequence is \emph{zero-sum} if its elements sum to
$0$. This theorem has lead to many 
problems involving zero-sum sequences over groups.

In general, let $G$ be an abelian group, and
let $G_0 \subseteq G$ be a susbset. Let $\mathcal L
\subseteq \NN$. Then $s_{\mathcal L}(G_0)$ is defined
to be the minimal $\ell$ such that any sequence
of length $\ell$ with elements in $G_0$ contains
a zero-sum subsequence whose length is in $\mathcal L$.
When $G_0 = G$ and $\mathcal L = \{\exp(G)\}$,
this constant is called the
Erd\"os--Ginzburg--Ziv constant.

When $G = \ZZ$, this problem turns out to be not
very interesting --- if $G_0$ contains a nonzero element,
then $s_{\mathcal L}(G_0) = \infty$.
This has lead to \cite{augspurger2016avoiding} the study of
the \emph{modified Erd\"os--Ginzburg--Ziv constant}
$s_{\mathcal L}'(G_0)$, defined as the smallest
$\ell$ such that any \emph{zero-sum} sequence of length at least
$\ell$ with elements in $G_0$ contains a zero-sum
subsequence whose length is in $\mathcal L$.
When $\mathcal L = \{t\}$ is a single element,
we omit the set brackets for convenience.
In \cite{berger2016analogue}, the first author
determined modified EGZ constants in the infinite
cyclic case.
Here we treat the finite cyclic case and extensions.

\begin{prob}[{\cite[Problem 2]{berger2016analogue}}]
\label{prob:problem}
	Compute $s_t'(G)$ for $G = \ZZ/n\ZZ$ and $\Znn$.
\end{prob}

In this paper, we answer Problem \ref{prob:problem} 
for $G = \ZZ/n\ZZ$ and for $t = n$, $G = \Znn$.
Note that in both cases, when $n$ does not divide $t$, 
the quantity $s_t(G)$ is infinite.

\begin{restatable}{thmm}{cyclic}
\label{thm:cyclic}
	The modified EGZ constant of $\ZZ/n\ZZ$ is given by $s_{nt}'(\ZZ/n\ZZ)
	= (t+1)n - \ell + 1$, where $\ell$ is the smallest integer
	such that $\ell \nmid n$.
\end{restatable}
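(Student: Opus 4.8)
The plan is to reduce the entire theorem to a single sharpening of Erdős--Ginzburg--Ziv and then bootstrap. Write $L = (t+1)n - \ell + 1$. For the upper bound, if $S$ is zero-sum of length $\ge L$, it suffices to extract one zero-sum subsequence $B$ of length exactly $n$: then $S \setminus B$ is again zero-sum of length $\ge L - n = tn - \ell + 1$, so by induction on $t$ it contains a zero-sum subsequence of length $(t-1)n$, whose union with $B$ has length $nt$. Thus the whole upper bound collapses to the following claim, which is simultaneously the extraction step and the base case $t=1$: \emph{every zero-sum sequence over $\ZZ/n\ZZ$ of length $\ge 2n - \ell + 1$ contains a zero-sum subsequence of length $n$.} Since $\ell = 2$ precisely when $n$ is odd, in that case $2n - \ell + 1 = 2n-1$ and this is exactly Theorem~\ref{thm:egz}; the real content is the improvement by $\ell - 1 \ge 2$ for even $n$, which must be squeezed out of the hypothesis that all of $S$ sums to zero.

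I would prove the claim in contrapositive form: a zero-sum $S$ with no length-$n$ zero-sum subsequence has $|S| \le 2n - \ell$. No value can occur $\ge n$ times (those $n$ copies would be forbidden), so every multiplicity is $\le n-1$. The heart is the two-value case $S = a^{[x]} b^{[y]}$ with $x,y \le n-1$: a length-$n$ subsequence using $j$ copies of $b$ has sum $\equiv j(b-a) \pmod n$ since $na \equiv 0$, so avoiding a length-$n$ zero-sum subsequence means no multiple of $\ord(b-a)$ lies in the interval $[n-x,\,y]$, which together with the global condition $xa+yb \equiv 0$ forces $m = x+y \le 2n-\ell$ by a short computation. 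The value $\ell$ is forced here by its defining property $\ell - 1 \mid n$: this makes $\gcd(2n-\ell+1,\,n) = \gcd(\ell-1,\,n) = \ell - 1$ large, and it is exactly this large common factor that obstructs a two-valued configuration of length $2n-\ell+1$.

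The main obstacle is the general case of the claim, namely sequences with three or more distinct values. My approach would exploit that the relevant lengths $2n-\ell+1 \le m \le 2n-2$ all lie within $\ell - 3$ of the extremal Erdős--Ginzburg--Ziv length $2n-2$, so an inverse/structural form of EGZ (of Savchev--Chen type, classifying sufficiently long sequences with no length-$n$ zero-sum subsequence as essentially the two-valued $a^{[n-1]}b^{[n-1]}$ with $\gcd(a-b,n)=1$) should collapse $S$ onto the two-valued configuration already handled, whence $|S| \le 2n - \ell$. Making this rigorous across the whole range, and dispatching the finitely many small-$n$ or large-$\ell$ cases where such inverse theorems degrade, is where I expect the genuine difficulty to lie; an alternative is an induction on the prime factorization of $n$, with the prime case supplied by the polynomial (Chevalley--Warning) proof of EGZ and the gain by $\ell-1$ entering only at the composite steps.

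For the matching lower bound I would exhibit, for each $t$, a zero-sum sequence of length $(t+1)n - \ell$ with no length-$nt$ zero-sum subsequence; by complementation inside a zero-sum sequence this is equivalent to having no zero-sum subsequence of length $n-\ell$. Such a sequence is given uniformly by taking $tn - \ell + g$ copies of $1$ together with $n - g$ copies of $x = 1 + \omega$, where $g = \gcd(\ell,n)$ and $\omega$ is any unit with $g\omega \equiv -\ell \pmod n$; such $\omega$ exists because $\gcd(\ell/g,\, n/g) = 1$ and $(\ZZ/n\ZZ)^\times$ surjects onto $(\ZZ/(n/g)\ZZ)^\times$. One checks the total sum is $0$, and that a length-$(n-\ell)$ subsequence with $j$ copies of $x$ has sum $\equiv j\omega - \ell \pmod n$, which vanishes only when $j \equiv n - g \pmod n$; as $n - g > n - \ell$, no such $j$ arises among the at most $n-\ell$ chosen terms. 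For odd $n$ this specializes to $tn-1$ copies of $1$ and $n-1$ copies of $-1$.
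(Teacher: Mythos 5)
Your reduction of the upper bound to the case $t=1$ and your lower-bound construction are both sound, and they mirror the paper's own structure (Corollary~\ref{cor:cyclicupper} and Proposition~\ref{prop:constructionn}); your two-valued sequence of length $(t+1)n-\ell$, checked via complementation, is a correct variant of the paper's construction. The genuine gap is exactly where you yourself locate it: the $t=1$ claim --- every zero-sum sequence of length at least $2n-\ell+1$ has a zero-sum subsequence of length $n$ --- is the entire content of the upper bound, and your proposal does not prove it. Your two-valued computation is fine, but the plan to dispose of the general case by quoting a Savchev--Chen-type inverse theorem and ``collapsing'' onto the two-valued configuration cannot work as stated. Inverse theorems for the EGZ problem make no use of the hypothesis that the whole sequence is zero-sum, and without that hypothesis the extremal structures at lengths below $2n-2$ are genuinely not two-valued: for instance $0^{[n-1]}1^{[n-3]}2^{[1]}$ has length $2n-3$ (inside your range as soon as $\ell \ge 4$), has no zero-sum subsequence of length $n$, and has three distinct values, so it is not an affine image of any two-valued sequence. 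Hence any classification valid in your range must include multi-valued configurations, and ruling those out requires exploiting the global zero-sum hypothesis inside the classification --- which is essentially the original problem, not a citation.

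The missing idea is elementary and bypasses inverse theorems entirely: use the defining property of $\ell$ through the divisor $d=\ell-1$ of $n$ (any length $2n-d'$ with $2\le d'\le \ell-1$ is handled the same way, since all such $d'$ divide $n$). Given zero-sum $J$ with $|J|=2n-d$, apply Theorem~\ref{thm:egz} in $\ZZ/d\ZZ$ (to the terms reduced mod $d$) repeatedly to split $J$ into $2(n/d)-1$ blocks of size $d$ whose sums are divisible by $d$; the last block's sum is automatically divisible by $d$ because $J$ itself is zero-sum, and this is the only place the global hypothesis is needed. Writing the block sums as $dx_1,\dots,dx_{2(n/d)-1}$ and applying Theorem~\ref{thm:egz} a second time, now in $\ZZ/(n/d)\ZZ$, to the $x_i$, one obtains $n/d$ blocks whose union is a zero-sum subsequence of length exactly $n$. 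This two-level use of EGZ is the paper's Proposition~\ref{prop:cyclicupper}; it subsumes your two-valued case and the general case at once. Your alternative suggestion (induction on the prime factorization, Chevalley--Warning at primes) gestures toward this, but the mechanism that makes the divisibility hypothesis $\ell-1\mid n$ bite is the block decomposition, with EGZ used only as a black box; no polynomial method or case analysis over small $n$ is needed.
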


\begin{restatable}{thmm}{square}
\label{thm:square}
	We have $s_n'(\Znn) = 4n - \ell + 1$ where $\ell$ is
	the smallest integer such that $d \ge 4$ and
	$\ell \nmid n$.
\end{restatable}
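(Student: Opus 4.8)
The plan is to determine the largest length $L$ of a zero-sum sequence over $\Znn$ that has \emph{no} zero-sum subsequence of length $n$, and then conclude $s_n'(\Znn) = L + 1$; the claim is $L = 4n - \ell$, where $\ell \ge 4$ denotes the least non-divisor of $n$. For the upper bound, suppose $S$ is zero-sum with $|S| \ge 4n - \ell + 1$ but has no zero-sum subsequence of length $n$. The (unmodified) Erd\H{o}s--Ginzburg--Ziv constant of $\Znn$ is $4n - 3$ (Kemnitz's conjecture, proved by Reiher), so any sequence of length $4n-3$ already contains such a subsequence; hence we may assume $4n - \ell + 1 \le |S| \le 4n - 4$. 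In particular $\ell \ge 5$, so $4 \mid n$ and in fact $4, 5, \dots, \ell - 1$ all divide $n$. I would then invoke (or prove) an inverse/stability theorem for the Kemnitz constant: a sequence whose length is within $\ell - 4$ of the extremal value $4n-4$ and which avoids zero-sum subsequences of length $n$ is, up to an affine map $x \mapsto Ax + b$ with $A \in \GL_2(\ZZ/n\ZZ)$, supported on the four points $(0,0),(1,0),(0,1),(1,1)$ with multiplicities $m_i = n - 1 - c_i$, where $c_i \ge 0$ and $\sum_i c_i = 4n - 4 - |S| \le \ell - 5$.

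The remaining step of the upper bound is to feed in the global zero-sum hypothesis. Writing the actual points as $q_i = A p_i + b$, the condition $\sum_i m_i q_i = 0$ reads $A\bigl(\sum_i m_i p_i\bigr) = -|S|\,b$, which constrains $b$ in terms of the corrections $c_i$. A zero-sum subsequence of length $n$ corresponds to integers $0 \le a_i \le m_i$ with $\sum_i a_i = n$ and $A(\sum_i a_i p_i) = -n b$, and I would show that the small $c_i$, together with the divisibilities $4, \dots, \ell - 1 \mid n$, leave exactly enough freedom to solve this linear system, contradicting the assumption. The non-divisibility $\ell \nmid n$ is precisely the arithmetic obstruction that would otherwise block a solution, which explains why the constant degrades from $4n-3$ down to $4n - \ell + 1$.

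For the lower bound I would exhibit, for every $n$, a zero-sum sequence of length $4n - \ell$ with no zero-sum subsequence of length $n$. The main simplification is that deleting elements can only destroy zero-sum subsequences, so it suffices to (i) fix a four-point configuration whose full-multiplicity version (each point repeated $n-1$ times) already has no zero-sum subsequence of length $n$, and then (ii) lower the multiplicities by a total of $\ell - 4$ so as to force the entire sequence to be zero-sum while keeping its length at $4n - \ell$. For odd $n$ the configuration $\{(1,0),(-1,0),(0,1),(0,-1)\}$ sums to zero and has the no-length-$n$ property, since a zero-sum subsequence of length $n$ would force $a_1 = a_2$, $a_3 = a_4$ and hence $2(a_1 + a_3) = n$, impossible; here $\ell = 4$ and no correction is needed. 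The delicate case is $4 \mid n$, where this symmetric example fails and one must select a configuration adapted to the residues of $n$: the divisibilities $4, \dots, \ell - 1 \mid n$ are what permit the multiplicities to be corrected to a global zero-sum, while $\ell \nmid n$ guarantees that no combination of total length $n$ closes up.

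The principal obstacle is concentrated at the interface of the two bounds. On the upper side it is obtaining a sufficiently strong inverse theorem throughout the entire window $4n - \ell + 1 \le |S| \le 4n - 4$ (not merely at the extremal length $4n - 4$) and then correctly extracting the length-$n$ subsequence from the affine-normalized description, keeping in mind that the zero-sum condition is not itself affine-invariant. On the lower side it is producing a construction valid uniformly whenever $\ell > 4$, where the clean symmetric configuration breaks down and the case analysis genuinely relies on $\ell$ being the least non-divisor that is at least $4$.
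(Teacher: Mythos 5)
Your proposal has a genuine gap in each half, and in both cases the missing step is where the actual content of the theorem lies.

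For the upper bound, everything rests on the ``inverse/stability theorem'' you propose to ``invoke (or prove)'': that any sequence of length in $[4n-\ell+1,\,4n-4]$ with no zero-sum subsequence of length $n$ is, up to an affine map, supported on $\{(0,0),(1,0),(0,1),(1,1)\}$ with near-maximal multiplicities. No such theorem is available to invoke: even its extremal case (length exactly $4n-4$) is essentially Gao's ``Property D'' conjecture for $\Znn$, which is open in general, and you would need it in a stronger, stable form across the whole window. The paper needs no structure theory at all. If $J$ is zero-sum with $4n-\ell+1 \le |J| \le 4n-4$, write $|J| = 4n-d$ with $4 \le d \le \ell-1$, so that $d \mid n$ by minimality of $\ell$. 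Using Reiher's theorem in $(\ZZ/d\ZZ)^2$, greedily break off blocks of $d$ elements whose sums are $0 \pmod d$ until $3d$ elements remain; those remaining $3d$ elements are again zero-sum mod $d$, and a lemma proved in the paper by induction (every zero-sum sequence of length $3n$ in $\Znn$ has a zero-sum subsequence of length $n$; the induction step uses Reiher's Corollary~2.4 plus a complement trick) extracts one final block. This yields $4(n/d)-3$ blocks with sums $dx_1,\dots,dx_{4(n/d)-3}$, and Reiher's theorem applied to the $x_i$ in $(\ZZ/(n/d)\ZZ)^2$ assembles $n/d$ of the blocks into a zero-sum subsequence of length $n$. (Lengths $\ge 4n-3$ are handled by Reiher's theorem directly, as you note.)

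For the lower bound, your construction covers only odd $n$. When $n \equiv 2 \pmod 4$ we still have $\ell = 4$, but your parity argument collapses because $n$ is even: already for $n=2$ the sequence $(1,0),(1,0),(0,1),(0,1)$ contains the zero-sum subsequence $(1,0),(1,0)$ of length $2$, and in general $(1,0)^{n/2}(-1,0)^{n/2}$ is a zero-sum subsequence of length $n$. For $4 \mid n$ you give no construction at all. The paper's construction is uniform in $n$: take $(0,0),(0,1),(1,0),(1,1)$ with multiplicities $a,b,c,d \le n-1$ summing to $4n-\ell$. Such a sequence can have no zero-sum subsequence of length $n$, since each coordinate of the sum of such a subsequence would have to be $0$ or $n$, and all four sign patterns force some multiplicity to reach $n$. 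One then chooses $a,b,c,d$ so that $g = \gcd(n,\ell)$ divides both $c+d$ and $b+d$ (explicit values are given in the paper, with a small adjustment when $g=1$), and translates every element by a constant $(r,s)$ solving $\ell\,(r,s) \equiv (c+d,\,b+d) \pmod n$. The translation makes the whole sequence zero-sum, while zero-sum subsequences of length exactly $n$ are unchanged, since a translation shifts their sums by $n(r,s) = 0$. This is precisely the translation asymmetry you observed, exploited in the opposite direction: build the subsequence-free configuration first, then restore the global zero-sum condition by a shift.
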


\section{The cyclic case}
In this section we give the proof of Theorem \ref{thm:cyclic}.
As in \cite{reiher2007kemnitz}, if $J$ is a sequence of
elements of $\ZZ/n\ZZ$ or $\Znn$, we use
$(k \mid J)$ to denote the number of zero-sum subsequences of
$J$ of size $k$.

\begin{proposition}
\label{prop:cyclicupper}
	If $d \mid n$ and $J$ is a zero-sum sequence in
	$\ZZ/n\ZZ$ of length $2n - d$, then $(n \mid J) > 0$.
\end{proposition}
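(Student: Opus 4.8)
The plan is to induct on $n$, mimicking the classical inductive proof of the Erd\"os--Ginzburg--Ziv theorem (Theorem \ref{thm:egz}) while keeping careful track of the zero-sum hypothesis on $J$. When $d = 1$ the sequence $J$ has length $2n - 1$, so Theorem \ref{thm:egz} applies directly and no induction is needed; the case $n = 1$ is trivial. So assume $d > 1$ and fix a prime $p \mid d$. Since $d \mid n$ we also have $p \mid n$, and we write $n = pn'$ and $d = pd'$, noting that $d' \mid n'$.

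The crucial point is the choice $p \mid d$: it guarantees that $|J| = 2n - d = p(2n' - d')$ is divisible by $p$, which is exactly what lets the zero-sum condition survive the reduction. I would first partition $J$ entirely into $2n' - d'$ blocks, each of length exactly $p$ and each zero-sum modulo $p$. To produce such a partition, repeatedly apply Theorem \ref{thm:egz} for the prime $p$ (after reducing elements modulo $p$) to extract disjoint length-$p$ subsequences summing to $0$ modulo $p$; this is possible as long as at least $2p - 1$ elements remain, so we can peel off the first $2n' - d' - 1$ blocks, after which $p$ elements are left. Those final $p$ elements then automatically sum to $0$ modulo $p$, since $J$ and every extracted block do, and hence form the last block.

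Next I would pass to the quotient. The sum $\sigma_i$ of the $i$-th block lies in the subgroup $p(\ZZ/n\ZZ) \cong \ZZ/n'\ZZ$, so writing $\sigma_i = p\,\bar a_i$ defines a sequence $J' = (\bar a_1, \dots, \bar a_{2n'-d'})$ in $\ZZ/n'\ZZ$ of length $2n' - d'$. Because $J$ is zero-sum we have $\sum_i \sigma_i = 0$, and therefore $J'$ is zero-sum as well, with $d' \mid n'$. Applying the inductive hypothesis to $J'$ (valid since $n' < n$) yields a zero-sum subsequence of $J'$ of length $n'$; pulling the corresponding blocks back to $J$ produces a zero-sum subsequence of length $pn' = n$, giving $(n \mid J) > 0$.

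The step I expect to require the most care is the partition of $J$ into length-$p$ zero-sum blocks: one must check that the greedy extraction never runs out of room (using $d' \le n'$, so that $2n' - d' \ge 1$ and before the last step exactly $2p \ge 2p-1$ elements remain) and that the leftover final $p$ elements really are zero-sum modulo $p$. Once that is in place, the descent and the pullback are routine, and the insistence on a prime \emph{dividing $d$} (rather than an arbitrary prime factor of $n$) is precisely what keeps $|J|$ a multiple of $p$ and thereby preserves the hypothesis that drives the induction.
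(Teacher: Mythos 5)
Your proof is correct, and it runs on the same engine as the paper's --- greedily extract zero-sum blocks via Theorem \ref{thm:egz}, spend the global zero-sum hypothesis to force the leftover elements to form the final block, then apply a zero-sum result to the sequence of block sums in a quotient group --- but you organize the descent differently. The paper does everything in one pass: since Theorem \ref{thm:egz} holds for every modulus, not just primes, it extracts blocks of size $d$ with sums $\equiv 0 \pmod d$ directly, obtaining $2(n/d) - 1$ blocks (the leftover $d$ elements being zero-sum mod $d$ because $J$ is), and then a single application of EGZ in $\ZZ/(n/d)\ZZ$ to the block sums finishes with no induction at all. You instead induct on $n$, peeling off one prime $p \mid d$ at a time: blocks of size $p$, then the inductive hypothesis for the pair $(n/p, d/p)$ applied to the block sums in $\ZZ/(n/p)\ZZ$, with base case $d = 1$ being EGZ itself; your observation that $p \mid d$ keeps $|J| = p(2n' - d')$ divisible by $p$, so that the leftover block has size exactly $p$ and is forced to be zero-sum, is precisely the paper's divisibility bookkeeping with $d$ in place of $p$. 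What your route buys: it only ever invokes EGZ at prime moduli, so the argument is self-contained for a reader who knows only the prime case of Theorem \ref{thm:egz} (from which the composite case is classically derived by exactly this kind of descent). What it costs: an induction, and the bookkeeping with the $\bar a_i$ that the paper's one-shot size-$d$ decomposition avoids. The delicate points you flagged all check out: the greedy extraction never stalls (just before the last peel, $2p \ge 2p - 1$ elements remain), the final $p$ elements are zero-sum mod $p$ by subtraction, and the pullback is sound since $\sum \bar a_i \equiv 0 \pmod{n'}$ gives $\sum \sigma_i = p \sum \bar a_i \equiv 0 \pmod n$, yielding a zero-sum subsequence of length $pn' = n$.
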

\begin{proof}
	By Theorem \ref{thm:egz}, we can break off 
	subsequences of $J$ of size $d$ with sum $0 \pmod d$
	until we have fewer than $2d-1$ remaining.
	In fact, since $d \mid n$, we will have exactly
	$d$ remaining. But since the
	sum was zero-sum to begin with, the last $d$ must
	also sum to zero, so we have $2(n/d) - 1$
	blocks of size $d$ with sums
	$dx_1, \dots, dx_{2(n/d) - 1}$ for some $x_i$.
	By Theorem \ref{thm:egz}, some $n/d$ of these
	must sum to $0$ in $\ZZ/(n/d)\ZZ$, so the union
	of these blocks gives a subsequence of length
	$n$ whose sum is zero in $\ZZ/n\ZZ$.
\end{proof}

\begin{corollary}
\label{cor:cyclicupper}
	Let $\ell$ be the smallest positive integer such that
	$\ell \nmid n$, and let $t \ge 1$. If $J$ is
	a zero-sum sequence in $\ZZ/n\ZZ$ of length at least
	$(t + 1)n - \ell + 1$, then $(nt \mid J) > 0$.
\end{corollary}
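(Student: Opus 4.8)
The plan is to deduce this from Theorem~\ref{thm:egz} and the argument of Proposition~\ref{prop:cyclicupper}, splitting on the length $|J|$ in order to handle the word ``at least'' in the hypothesis. The engine is a repeated-extraction lemma: \emph{for any $m\ge 1$ and $t\ge 1$, any sequence in $\ZZ/m\ZZ$ of length at least $(t+1)m-1$ contains a zero-sum subsequence of length $tm$.} I would prove this by greedily peeling off zero-sum blocks of size $m$ using Theorem~\ref{thm:egz}: before the $i$-th extraction at least $(t+1)m-1-(i-1)m=(t+2-i)m-1$ elements remain, which is $\ge 2m-1$ precisely when $i\le t$, so $t$ disjoint zero-sum blocks of length $m$ can be removed, and their union is a zero-sum subsequence of length $tm$.

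With this lemma in hand I would treat two ranges of $|J|$. When $|J|\ge (t+1)n-1$, the lemma with $m=n$ immediately produces a zero-sum subsequence of length $tn$, using neither the zero-sum hypothesis on $J$ nor any divisibility. The remaining case is $(t+1)n-\ell+1\le |J|\le (t+1)n-2$, where I write $|J|=(t+1)n-j$ with $2\le j\le \ell-1$. The key point is that $j<\ell$ forces $j\mid n$, which is exactly what makes the block decomposition exact.

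For this case I would run the argument of Proposition~\ref{prop:cyclicupper} with block size $j$. Since $j\mid n$ we have $|J|=(t+1)n-j\equiv 0\pmod{j}$, so repeatedly extracting size-$j$ subsequences of sum $0\pmod{j}$ (via Theorem~\ref{thm:egz} in $\ZZ/j\ZZ$) leaves a remainder that is $\equiv 0\pmod{j}$ and smaller than $2j-1$, hence equal to $0$ or $j$; because $J$ is zero-sum, a leftover of $j$ elements itself sums to $0\pmod{j}$ and forms a final block. This partitions $J$ into exactly $(t+1)(n/j)-1$ blocks, each of sum $j x_i$, and the zero-sum condition on $J$ makes $(x_i)$ a sequence in $\ZZ/(n/j)\ZZ$ of length $(t+1)(n/j)-1$. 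Applying the repeated-extraction lemma with $m=n/j$ yields $t(n/j)$ of these blocks whose $x_i$ sum to $0\pmod{n/j}$; their union is a subsequence of $J$ of length $t(n/j)\cdot j=tn$ and sum $j\cdot 0=0$, as desired.

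I expect the main obstacle to be precisely the ``at least'' in the hypothesis: the naive greedy extraction of size-$(\ell-1)$ blocks loses up to two blocks to the leftover and falls one short of what the final application of the lemma needs. The resolution is the observation above---matching the \emph{block size to the exact deficiency} $j=(t+1)n-|J|$, which divides $n$ and so leaves no wasteful remainder. The degenerate subcases $n/j=1$ (trivial target group) and the very small values of $n$ for which $\ell-1=n$ should be checked separately, but they are routine.
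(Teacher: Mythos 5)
Your proof is correct, and it organizes the argument differently from the paper. The paper proves this corollary by induction on $t$: the base case $t=1$ is Proposition~\ref{prop:cyclicupper} (block size matched to the deficiency, which divides $n$ by minimality of $\ell$), and the inductive step extracts a zero-sum subsequence of length $(t-1)n$ via the inductive hypothesis, removes it, and applies the $t=1$ case to the zero-sum remainder, which has length at least $2n-\ell+1$. You avoid induction on $t$ entirely: you match the block size $j$ to the exact deficiency of $J$ itself, decompose all of $J$ into $(t+1)(n/j)-1$ blocks in one pass, and then pull out $t(n/j)$ of them at once with your repeated-extraction lemma. Both proofs run on the same ingredients --- greedy EGZ extraction, the fact that any deficiency below $\ell$ divides $n$, the zero-sum hypothesis to close off the final block, and EGZ applied to the quotient sequence of block sums --- but they deploy $t$ differently: the paper peels off one length-$n$ zero-sum piece per induction step, while you do all the multiplicity work at the quotient level. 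The paper's route is shorter and reuses the proposition as a black box. Your route buys two things: it handles the ``at least'' in the hypothesis explicitly (the paper's base case silently relies on Theorem~\ref{thm:egz} alone for lengths $\ge 2n-1$, since Proposition~\ref{prop:cyclicupper} is stated only for exact length $2n-d$), and it isolates a reusable extraction lemma that makes transparent exactly where the zero-sum hypothesis on $J$ is needed --- only to form the last block. The degenerate cases you flag ($n/j=1$, which can occur only for $n\le 2$) are indeed routine, since there the blocks exhaust $J$ and their union is $J$ itself.
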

\begin{proof}
We induct on $t$. The case $t = 1$ follows from Proposition
	\ref{prop:cyclicupper} since
	$\ell - 1, \dots, 1$ all divide $n$. Suppose the result
	is true for positive integers less than $t > 1$.
	Then $J$ contains a zero-sum subsequence of length
	$(t-1)n$. Remove these elements from $J$.
	We are left with a zero-sum sequence of length 
	$2n - \ell + 1$. This is the $t = 1$ case, so we can
	find another zero-sum subsequence of length $n$.
	Combine this with the $(t-1)n$ to get the desired
	subsequence of length $nt$.
\end{proof}

\begin{proposition}
\label{prop:constructionn}
	Suppose $\ell \nmid n$ and $t \ge 1$. Then there exists a zero-sum
	subsequence in $\ZZ/n\ZZ$ of length $(1 + t)n - \ell$ which
	contains no zero-sum subsequence of length $nt$.
\end{proposition}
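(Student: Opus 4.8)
The plan is to pass to complements and then exhibit a sequence using only two consecutive values. Since the sequence we seek is required to have total sum $0$, a subsequence of length $nt$ is zero-sum if and only if its complement, which has length $(1+t)n - \ell - nt = n - \ell$, is zero-sum. Writing $r = n - \ell$, it therefore suffices to construct a zero-sum sequence $J$ of length $N = (1+t)n - \ell$ that contains no zero-sum subsequence of length $r$. (When $\ell \ge n$, which happens only for $n \le 2$, one has $N < nt$, so no subsequence of length $nt$ exists and any zero-sum sequence of length $N$ works. Hence I may assume $n \ge 3$, in which case $1 \le r \le n-1$.)

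The idea is to choose two consecutive values $c$ and $c+1$ so that every $r$ of them sum into a window of residues that avoids $0$. Set $e = \gcd(n, \ell) = \gcd(n, r)$. Because $\ell \nmid n$, the divisor $e$ of $\ell$ is proper, so $1 \le e \le \ell - 1$. Since $\gcd(r, n) = e$, the congruence $rc \equiv e \pmod n$ is solvable; I fix one solution $c$. I then take $J$ to consist of $x$ copies of $c$ and $y$ copies of $c+1$, where $x + y = N$ and $y \in [0, n)$ is chosen with $y \equiv -Nc \pmod n$, so that the total sum $Nc + y \equiv 0 \pmod n$. Note $x = N - y \ge 0$ since $y < n \le N$.

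To verify the key property, I observe that any subsequence of $J$ of length $r$ consists of $r - j$ copies of $c$ and $j$ copies of $c+1$ for some $j \in [0, r]$, and hence has sum $rc + j \equiv e + j \pmod n$. As $j$ ranges over $[0, r]$, the quantity $e + j$ lies in $[e, e + r] = [e, e + n - \ell]$, and the inequality $e \le \ell - 1$ gives $e + r \le n - 1$; thus the sum lies in $[1, n-1]$ and is never $\equiv 0 \pmod n$. Therefore $J$ has no zero-sum subsequence of length $r$, and by the complementation above, none of length $nt$, as desired.

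The main point to get right is the placement of the residue window. One must check simultaneously that the target residue $e$ for $rc$ is attainable — which is why $e = \gcd(r,n)$ is the correct choice and guarantees solvability of $rc \equiv e \pmod n$ — and that the entire block $[e, e+r]$ stays strictly between $0$ and $n$, which is exactly the inequality $e \le \ell - 1$ coming from $\ell \nmid n$. Everything else, namely arranging the length and the vanishing total sum through the free choice of $x$ and $y$, is routine.
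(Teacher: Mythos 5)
Your proof is correct, and it takes a genuinely different route from the paper's, even though both constructions end up being sequences supported on two consecutive residues. The paper works with $0$'s and $1$'s directly: it caps the multiplicities (at most $tn-1$ zeros and at most $n-1$ ones) so that any subsequence of length $nt$ must contain between $1$ and $n-1$ ones and hence has nonzero sum, and it then makes the whole sequence zero-sum by adding one constant to every term; this shift cannot create a zero-sum subsequence of length $nt$ because $nt \equiv 0 \pmod{n}$, and it exists precisely when $\gcd(n,\ell)$ divides the number of ones, which dictates the paper's multiplicity choice. You instead pass to complements: since the total sum is zero, a length-$nt$ subsequence is zero-sum iff its length-$r$ complement is, with $r = n-\ell$, and you choose the value $c$ (via $rc \equiv \gcd(n,r) \pmod{n}$) so that every length-$r$ sum lands in the window $[e, e+r] \subseteq [1, n-1]$, achieving the zero total sum by tuning the multiplicity $y$ rather than by shifting. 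The complementation reduction is the real difference: it localizes the verification to short subsequences of length $r < n$, where the window argument is immediate, and it has the side benefit of forcing the degenerate case $\ell \ge n$ into the open, a case the paper's writeup does not address (and where its choice $a = n-\ell+g$ would go negative).

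One small correction to your exposition: since the proposition allows an arbitrary $\ell \nmid n$, the case $\ell \ge n$ is not confined to $n \le 2$ (take $\ell = n+1$ for any $n \ge 2$); that claim is true only when $\ell$ is the \emph{smallest} non-divisor, as in Theorem \ref{thm:cyclic}. This does not damage your argument, because your trivial case disposes of all $\ell > n$ regardless of $n$; you should simply replace ``I may assume $n \ge 3$'' by ``I may assume $\ell < n$,'' which is exactly the hypothesis your main argument uses to get $1 \le r \le n-1$.
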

\begin{proof}
	Consider a sequence of
	$0$'s and $1$'s with multiplicities $a \le tn - 1$,
	$b \le n - 1$ respectively where
	$a + b = (t+1)n - \ell$. Such a sequence will have
	no zero-sum subsequence of length $nt$.
	It suffices to find $a$, $b$ such that
	$g = \gcd(n, \ell) \mid b$, because then we can
	add some constant to every term of the sequence to make
	it zero-sum. Note that adding a constant to
	every term does not introduce any new zero-sum
	subsequences. It suffices to take
	$b = tn - g$ and $a = n - \ell + g
	\le n - \ell/2 \le n - 1$.
\end{proof}

Corollary \ref{cor:cyclicupper} and Proposition \ref{prop:constructionn} together imply
Theorem \ref{thm:cyclic}.

\section{The case $\Znn$}
In this section we prove Theorem \ref{thm:square}.
We first prove some preliminary lemmas.
The following results from \cite{reiher2007kemnitz}
are key.

\begin{lemma}[{\cite[Corollary 2.4]{reiher2007kemnitz}}]
\label{lem:por2p}
	Let $p$ be a prime, and let $J$ be a sequence of 
	elements in $(\ZZ/p\ZZ)^2$. If $|J| = 3p-2$ or
	$|J| = 3p-1$, then $(p \mid J) = 0$ implies
	$(2p \mid J) \equiv - 1 \pmod p$.
\end{lemma}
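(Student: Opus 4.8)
The plan is to deduce the lemma from the Chevalley--Warning theorem by counting zero-sum subsequences of all admissible sizes simultaneously modulo $p$. Write $J = (a_1, \dots, a_m)$ with $a_i = (b_i, c_i) \in (\ZZ/p\ZZ)^2$ and $m \in \{3p-2, 3p-1\}$, and set $[m] = \{1, \dots, m\}$. Over $\FF_p$ the polynomial $x^{p-1}$ is the indicator of $x \neq 0$, so I would form the three polynomials
\[
f_1 = \sum_{i=1}^m b_i x_i^{p-1}, \qquad f_2 = \sum_{i=1}^m c_i x_i^{p-1}, \qquad f_3 = \sum_{i=1}^m x_i^{p-1}
\]
in the variables $x_1, \dots, x_m$. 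A tuple $(x_1, \dots, x_m) \in \FF_p^m$ is a common zero of $f_1, f_2, f_3$ precisely when its support $S = \{i : x_i \neq 0\}$ satisfies $\sum_{i \in S} a_i = 0$ and $p \mid |S|$, and each such $S$ arises from exactly $(p-1)^{|S|}$ tuples.

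Counting the common zeros $N$ two ways then gives
\[
N \;=\; \sum_{\substack{S \subseteq [m] \\ \sum_{i \in S} a_i = 0 \\ p \mid |S|}} (p-1)^{|S|} \;\equiv\; \sum_{\substack{S \subseteq [m] \\ \sum_{i \in S} a_i = 0 \\ p \mid |S|}} (-1)^{|S|} \pmod p .
\]
Since $\deg f_1 + \deg f_2 + \deg f_3 = 3(p-1) = 3p-3 < m$ for both values of $m$, the Chevalley--Warning theorem forces $N \equiv 0 \pmod p$. The crucial structural input is that $m \le 3p - 1 < 3p$, so no zero-sum subsequence of size $3p$ can fit inside $J$; hence the only sizes divisible by $p$ appearing in the sum are $0$, $p$, and $2p$, with respective contributions $(-1)^0 = 1$, $(-1)^p(p \mid J)$, and $(-1)^{2p}(2p \mid J)$.

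For odd $p$ this reads $1 - (p \mid J) + (2p \mid J) \equiv 0 \pmod p$, so the hypothesis $(p \mid J) = 0$ yields $(2p \mid J) \equiv -1 \pmod p$, as claimed; the case $p = 2$ is identical, since all of $0, 2, 4$ are even and $1 + (2 \mid J) + (4 \mid J) \equiv 0 \pmod 2$ with $(2 \mid J) = 0$ again gives $(4 \mid J) \equiv -1$. I expect the work here to be entirely bookkeeping rather than conceptual: the genuine content is the single application of Chevalley--Warning, and the only points requiring care are verifying the degree inequality $3p - 3 < m$ for both $m = 3p-2$ and $m = 3p-1$, confirming that size $3p$ is excluded so the sum truncates at $2p$, and tracking the sign $(p-1)^{|S|} \equiv (-1)^{|S|}$ so that the alternation for odd $p$ is recorded correctly.
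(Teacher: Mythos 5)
The paper does not prove this lemma at all---it imports it verbatim as \cite[Corollary 2.4]{reiher2007kemnitz}---so there is no internal proof to compare against; your Chevalley--Warning argument is correct and complete, and it is essentially the standard polynomial-method proof used in the cited source. All the delicate points check out: the degree bound $3(p-1) < m$ holds for both $m = 3p-2$ and $m = 3p-1$, the truncation of the count at size $2p$ is legitimate since $m < 3p$, membership of a tuple among the common zeros depends only on its support (so the $(p-1)^{|S|}$ multiplicity is exact), and the $p = 2$ case works because $-1 \equiv 1 \pmod 2$.
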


\begin{lemma}[{\cite[Corollary 2.5]{reiher2007kemnitz}}]
\label{lem:3p}
	Let $p$ and $J$ be as in Lemma \ref{lem:por2p}.
	If $|J|$ is a zero-sum sequence with exactly $3p$
	elements, then $(p \mid J) > 0$.
\end{lemma}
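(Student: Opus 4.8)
The plan is to argue by contradiction: suppose $J$ is a zero-sum sequence of length exactly $3p$ with $(p \mid J) = 0$, and extract a contradiction from Lemma~\ref{lem:por2p}. The structural feature I would exploit is that $J$ is itself zero-sum, which creates a complementation symmetry. Indeed, if $S$ is any subsequence of $J$ of length $k$ and sum $\sigma$, then the complementary subsequence $J \setminus S$ has length $3p - k$ and sum $-\sigma$, since the two sums add up to $\mathrm{sum}(J) = 0$. Taking $k = p$ and $\sigma = 0$, this gives a bijection (an involution) between the zero-sum subsequences of $J$ of length $p$ and those of length $2p$, so that $(2p \mid J) = (p \mid J) = 0$.

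Next I would pass to the length regime covered by Lemma~\ref{lem:por2p}. Fix any single element of $J$ and let $J'$ be the subsequence of length $3p - 1$ obtained by deleting it. Then two things hold: first, every length-$p$ subsequence of $J'$ is also a subsequence of $J$, so $(p \mid J') = 0$; second, every zero-sum length-$2p$ subsequence of $J'$ is in particular a zero-sum length-$2p$ subsequence of $J$, so $(2p \mid J') \le (2p \mid J) = 0$, forcing $(2p \mid J') = 0$. Applying Lemma~\ref{lem:por2p} to $J'$, which has length $3p-1$ and satisfies $(p \mid J') = 0$, yields $(2p \mid J') \equiv -1 \pmod p$. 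This contradicts $(2p \mid J') = 0$, since $-1 \not\equiv 0 \pmod p$ for every prime $p$. Hence $(p \mid J) = 0$ is impossible and $(p \mid J) > 0$.

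The argument is short enough that there is no real computational obstacle; the only points needing care are the chain $(2p \mid J') \le (2p \mid J) = (p \mid J) = 0$ and, underlying it, the complementation bijection. This is exactly where the hypothesis that $J$ is a \emph{zero-sum} sequence of length $3p$ (rather than an arbitrary sequence) is essential: complementation only sends zero-sum subsequences to zero-sum subsequences because $\mathrm{sum}(J) = 0$. I would also verify the two easy monotonicity facts — that deleting an element cannot create a new short zero-sum subsequence, and that $|J'| = 3p-1$ is genuinely one of the two allowed lengths in Lemma~\ref{lem:por2p} — but both are immediate.
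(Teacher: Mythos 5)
Your proof is correct, but note that the paper does not prove this lemma at all: it is imported verbatim as \cite[Corollary 2.5]{reiher2007kemnitz}, so there is no in-paper argument to compare against. What you have written is essentially Reiher's own derivation of that corollary from his Corollary 2.4 (the paper's Lemma~\ref{lem:por2p}): the complementation involution $S \mapsto J \setminus S$, valid precisely because $J$ is zero-sum of length $3p$, gives $(2p \mid J) = (p \mid J)$, and deleting one element produces a sequence $J'$ of length $3p-1$ with $(p \mid J') = (2p \mid J') = 0$, contradicting $(2p \mid J') \equiv -1 \pmod p$. All steps check out, including the monotonicity claims $(p \mid J') \le (p \mid J)$ and $(2p \mid J') \le (2p \mid J)$ and the fact that Lemma~\ref{lem:por2p} requires no zero-sum hypothesis on $J'$; so your writeup would in fact let the paper state this lemma self-contained rather than as a citation.
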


\begin{theorem}[{\cite[Theorem 3.2]{reiher2007kemnitz}}]
\label{thm:egznn}
	If $J$ is a sequence of length $4n -3$ in 
	$\Znn$ then $(n \mid J) > 0$.
\end{theorem}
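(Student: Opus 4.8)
The plan is to reduce the general statement to the prime case by a multiplicativity (tower) argument, so that the bulk of the work becomes the case $n = p$ prime, which I would attack via the Chevalley--Warning method together with the two counting lemmas. Throughout, let $s(n)$ abbreviate the assertion ``every sequence $J$ of length $4n-3$ in $(\ZZ/n\ZZ)^2$ satisfies $(n \mid J) > 0$.''

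First, multiplicativity: assuming $s(m)$ and $s(n)$, I would prove $s(mn)$. Given $J$ of length $4mn - 3$, reduce its entries modulo $m$ and repeatedly extract length-$m$ blocks that are zero-sum modulo $m$ (each extraction legitimate as long as at least $4m-3$ entries remain). A short count shows this peels off exactly $4n-3$ disjoint blocks $B_1, \dots, B_{4n-3}$, after which only $3(m-1) < 4m-3$ entries are left. Each block sum has the form $m\tau_i$ with $\tau_i \in (\ZZ/n\ZZ)^2$, and applying $s(n)$ to the length-$(4n-3)$ sequence $\tau_1, \dots, \tau_{4n-3}$ produces $n$ indices whose $\tau_i$ sum to zero modulo $n$. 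The union of the corresponding blocks then has length $mn$ and sum $m\cdot(n\gamma) \equiv 0 \pmod{mn}$, the desired zero-sum subsequence. Writing $n$ as a product of primes and iterating reduces everything to the prime case.

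For the prime case, take $|J| = 4p - 3$ with entries $(a_i, b_i)$ and suppose for contradiction that $(p \mid J) = 0$. Over $\FF_p$, introduce variables $x_1, \dots, x_{4p-3}$ and the three forms $f_1 = \sum a_i x_i^{p-1}$, $f_2 = \sum b_i x_i^{p-1}$, $f_3 = \sum x_i^{p-1}$, of total degree $3(p-1) < 4p-3$. By Chevalley--Warning the number of common zeros is divisible by $p$; since a common zero is exactly a choice of support $S$ with $\sum_{i \in S}(a_i,b_i)=0$ and $p \mid |S|$, weighting each such $S$ by $(p-1)^{|S|} \equiv (-1)^{|S|}$ and grouping the possible sizes $|S| \in \{0,p,2p,3p\}$ gives, for odd $p$, the master congruence
\[
1 - (p \mid J) + (2p \mid J) - (3p \mid J) \equiv 0 \pmod p.
\]
Lemma \ref{lem:3p} then forces $(3p \mid J) = 0$, since a zero-sum $3p$-subsequence would itself contain a zero-sum $p$-subsequence, contradicting $(p \mid J) = 0$. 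Substituting $(p \mid J) = 0$ and $(3p \mid J) = 0$ yields the crux relation $(2p \mid J) \equiv -1 \pmod p$.

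The main obstacle is converting $(2p \mid J) \equiv -1$ into a contradiction. Lemma \ref{lem:por2p} supplies the analogous relation $(2p \mid K) \equiv -1 \pmod p$ for every length-$(3p-1)$ or length-$(3p-2)$ subsequence $K \subseteq J$ (each automatically has $(p \mid K) = 0$), and the natural attempt is to double count pairs $T \subseteq K$ with $T$ a zero-sum $2p$-subsequence. However, the binomial coefficients governing such a count, for instance $\binom{2p-3}{p-1}$, vanish modulo $p$ by Lucas's theorem, so the naive comparison degenerates to $0 \equiv 0$. The delicate, technical heart of the proof is therefore to extract a \emph{nontrivial} relation instead --- for example by tracking the counts $(2p \mid \cdot)$ coherently through the lengths $3p-2, 3p-1, 3p, 4p-3$ and playing the master congruences at these lengths against Lemma \ref{lem:3p} --- so as to contradict $(2p \mid J) \equiv -1$. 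The small prime $p = 2$ is handled directly. Once the prime case is established, the multiplicativity step above promotes it to arbitrary $n$, completing the proof.
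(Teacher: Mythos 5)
You have a real gap, and it sits exactly where the theorem's actual difficulty lies. First, for context: the paper does not prove this statement at all --- it is quoted as Reiher's theorem (the resolution of Kemnitz's conjecture), and the two lemmas you invoke (Lemmas \ref{lem:por2p} and \ref{lem:3p}) are themselves intermediate results from that same paper of Reiher, proved there before and independently of Theorem 3.2, so your appeal to them is not circular. Your reduction to the prime case is correct and standard: the block count works out (one can extract a block of $m$ entries zero-sum mod $m$ whenever at least $4m-3$ entries remain, yielding exactly $4n-3$ blocks and leaving $3(m-1) < 4m-3$ entries), and the union of $n$ suitably chosen blocks is zero-sum mod $mn$. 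Your Chevalley--Warning setup is also correct: for $|J| = 4p-3$ with $(p \mid J) = 0$, the support-counting argument with weights $(p-1)^{|S|} \equiv (-1)^{|S|}$ gives $1 - (p \mid J) + (2p \mid J) - (3p \mid J) \equiv 0 \pmod p$ for odd $p$, and Lemma \ref{lem:3p} kills the $(3p \mid J)$ term, leaving $(2p \mid J) \equiv -1 \pmod p$. But everything up to this point was already known to Kemnitz in the 1980s; the step from here to a contradiction is precisely what remained open for two decades.

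That step is where your proposal stops being a proof. You correctly diagnose that the naive double count fails --- e.g.\ $\binom{2p-3}{p-1} \equiv 0 \pmod p$ by Lucas, so summing Lemma \ref{lem:por2p} over all $(3p-1)$-subsequences of $J$ yields only $0 \equiv 0$ --- but your proposed remedy, to ``track the counts coherently through the lengths $3p-2$, $3p-1$, $3p$, $4p-3$ and play the master congruences against Lemma \ref{lem:3p},'' is a description of the obstacle, not an argument. In fact essentially every linear combination of the congruences available at those lengths degenerates in the same way, with all the governing binomial coefficients vanishing mod $p$; this is exactly why the problem resisted so long. Reiher's actual proof requires genuinely new ingredients beyond this framework: additional Chevalley--Warning identities counting zero-sum subsequences whose lengths lie in \emph{other} residue classes mod $p$, congruences for subsequences of length $3p-3$, and complementation arguments inside zero-sum subsequences, combined into a relation that does not collapse. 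Since you establish none of these (nor any substitute), the odd-prime case --- which, given the easy $p=2$ case and your multiplicative induction, is the entire content of the theorem --- remains unproven.
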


We generalize Lemma \ref{lem:3p} to non-prime $n$.

\begin{lemma}
\label{lem:3n}
	If $J$ is a zero-sum sequence of length $3n$
	in $(\ZZ/n\ZZ)^2$, then $(n \mid J) > 0$.
\end{lemma}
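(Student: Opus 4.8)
The plan is to induct on the number of prime factors of $n$ (with multiplicity), reducing the composite case to the prime case, which is exactly Lemma~\ref{lem:3p}. So write $n = pm$ with $p$ prime and $m > 1$, and let $J$ be a zero-sum sequence of length $3n = 3pm$ in $\Znn$.

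First I would reduce modulo $p$. Let $\bar J$ denote the image of $J$ in $(\ZZ/p\ZZ)^2$ under coordinatewise reduction. The idea is to break $\bar J$ into blocks of size $p$ that are each zero-sum modulo $p$. As long as at least $4p - 3$ elements remain, Theorem~\ref{thm:egznn} (applied with $n = p$) produces another zero-sum subsequence of length $p$ to peel off; when exactly $3p$ elements remain I would instead invoke Lemma~\ref{lem:3p}. Each such block $B_i$ has sum lying in $p(\ZZ/n\ZZ)^2 \cong (\ZZ/m\ZZ)^2$, so I can write its sum as $p v_i$ with $v_i \in (\ZZ/m\ZZ)^2$.

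The point of collecting exactly $3m$ such blocks is that, since they exhaust all of $J$ and $J$ is zero-sum, $p\sum_i v_i = 0$ in $\Znn$, so $\sum_i v_i \equiv 0 \pmod m$; that is, $(v_i)_{i=1}^{3m}$ is a zero-sum sequence of length $3m$ in $(\ZZ/m\ZZ)^2$. By the induction hypothesis it has a zero-sum subsequence of length $m$, say indexed by $S$. Then $\bigcup_{i \in S} B_i$ is a subsequence of $J$ of length $|S| \cdot p = m p = n$, and its sum is $p\sum_{i \in S} v_i = 0$ in $\Znn$, giving the desired zero-sum subsequence of length $n$.

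The main obstacle is the endgame: to run this argument I must partition \emph{all} $3n$ elements into exactly $3m$ zero-sum (mod $p$) blocks of size $p$, and greedy peeling does not obviously achieve this. Peeling leaves a remainder of size $rp$ with $r \le 3$ that is zero-sum mod $p$; a remainder of size $p$ is itself a legitimate block, but a remainder of size $2p$ need not split into two zero-sum blocks of size $p$ (one can exhibit zero-sum sequences of length $2p$ in $(\ZZ/p\ZZ)^2$ with no zero-sum subsequence of length $p$), and the case $r = 3$ reduces via Lemma~\ref{lem:3p} to this same size-$2p$ difficulty. Resolving this is where I expect the real work to lie: I would try to control the stubborn remainder using the finer congruence in Lemma~\ref{lem:por2p}, which counts zero-sum subsequences of length $2p$ modulo $p$ in precisely these near-extremal configurations, or else relax the block structure to allow blocks of size $2p$ (whose values still lie in $(\ZZ/m\ZZ)^2$) and prove the corresponding weighted sub-sum statement in $(\ZZ/m\ZZ)^2$.
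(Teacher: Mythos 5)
Your proposal is incomplete, and you have in fact named the gap yourself: your argument requires partitioning \emph{all} $3n$ elements of $J$ into exactly $3m$ blocks of size $p$, each zero-sum mod $p$, and greedy peeling cannot deliver this --- it stalls at a zero-sum remainder of size $2p$ which, as you correctly observe, need not split into two zero-sum blocks of size $p$. Neither of your suggested rescues closes the hole as stated: Lemma \ref{lem:por2p} applies to sequences of length $3p-2$ or $3p-1$, whereas your remainders always have length a multiple of $p$, so the congruence never applies to the stubborn piece; and the ``weighted sub-sum statement'' in $(\ZZ/m\ZZ)^2$ with one block of size $2p$ is an unproven assertion that looks no easier than the original problem, since Reiher-type counting lemmas are available only at primes.

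The paper escapes by turning your decomposition inside out. It peels blocks of size $m$ that are zero-sum mod $m$ (possible while more than $4m-3$ elements remain, by Theorem \ref{thm:egznn}, and once more on the final $3m$ elements by the induction hypothesis itself), producing only $3p-2$ blocks and simply leaving $2m$ elements unblocked --- no full partition is ever attempted. The block-sums $x_1, \dots, x_{3p-2}$ now live in $(\ZZ/p\ZZ)^2$, i.e., the \emph{prime} sits at the level where Reiher's machinery exists: by Lemma \ref{lem:por2p}, either some $p$ of the $x_i$ sum to zero mod $p$, which yields $n$ elements of $J$ summing to zero mod $n$, or some $2p$ of them do, which yields a zero-sum subset of size $2n$ --- and then the complement of that subset inside $J$ has size exactly $n$ and is zero-sum because $J$ is. This complement trick is precisely what substitutes for the exhaustive partition you were trying to force. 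So the two decisive ideas you are missing are: (i) induct so that the prime is the quotient (blocks of size $m$, block-sums in $(\ZZ/p\ZZ)^2$) rather than the block size, and (ii) inside a zero-sum sequence of length $3n$, a zero-sum subset of size $2n$ is just as good as one of size $n$. To salvage your orientation of the induction you would need a composite-$m$ analogue of Lemma \ref{lem:por2p}, which is exactly what is unavailable.
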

\begin{proof}
	We induct on $n$. The base case $n = 1$ is clear.
	Assume the the lemma is true for all positive integers
	less than $n$. Let $n = pm$	with $p$ prime and $m < n$.

	Since $3n > 4m - 3$, we can find some $m$ elements
	of $J$ whose sum is $0 \pmod m$. Say their sum
	is $mx_1$ and remove these $m$ elements. We can continue
	doing this until there remain only $3m$ elements.
	But since $J$ was a zero-sum sequence, the remaining
	$3m$ elements must sum to $0 \pmod m$, so by the
	induction hypothesis, we can remove another $m$
	with sum a multiple of $m$. This gives us
	$3p-2$ blocks of size $m$ whose sums are
	$mx_1, \dots, mx_{3p-2}$ for some $x_i$.

	If some $p$ of the $x_i$ sum to $0 \pmod p$, then
	combining the blocks would give us $n$ elements whose
	sum is $0 \pmod n$, as desired. If not, by Lemma
	\ref{lem:por2p}, we must have some $2p$ of the $x_i$
	summing to $0 \pmod p$, so we have $2n$ elements
	whose sum is $0 \pmod n$. But since $J$ itself is
	zero-sum and has size $3n$, the complement
	is zero-sum as well and has size $n$.
\end{proof}

\begin{proposition}
\label{prop:upperbound}
	If $d\mid n$, and $J$ is a zero-sum sequence
	in $\Znn$ of length $4n - d$, then $(n \mid J ) > 0$. 
\end{proposition}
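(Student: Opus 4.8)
The plan is to mirror the cyclic Proposition \ref{prop:cyclicupper}, now using the two-dimensional bound $4d - 3$ from Theorem \ref{thm:egznn} in place of the one-dimensional bound $2d - 1$. Write $m = n/d$, so the hypothesis length is $4n - d = d(4m - 1)$. First I would repeatedly break off subsequences of size $d$ that are zero-sum modulo $d$: by Theorem \ref{thm:egznn} applied in $(\ZZ/d\ZZ)^2$, such a subsequence exists whenever at least $4d - 3$ elements remain. Since the remaining count is always a multiple of $d$ and starts at $d(4m-1)$, I continue peeling off blocks until exactly $3d$ elements remain; every intermediate stage has at least $4d > 4d - 3$ elements, so each peel is legitimate.

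The endgame is the subtle point, and it is exactly where Lemma \ref{lem:3n} earns its keep. Once $3d$ elements remain we can no longer invoke Theorem \ref{thm:egznn} directly, since $3d < 4d - 3$ for $d \ge 4$. However, because $J$ was zero-sum and every removed block summed to $0 \pmod d$, the $3d$ leftover elements themselves sum to $0 \pmod d$; thus Lemma \ref{lem:3n}, applied with its ``$n$'' taken to be $d$, supplies one further zero-sum block of size $d$. Counting, I will have peeled off $(d(4m-1) - 3d)/d = 4m - 4$ blocks before the endgame and one more at the end, for a total of exactly $4m - 3$ blocks.

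Each block has sum divisible by $d$, so I can write the $i$-th block sum as $d y_i$ with $y_i \in (\ZZ/m\ZZ)^2$ (well-defined, since multiplication by $d$ identifies $(\ZZ/m\ZZ)^2$ isomorphically with the $d$-divisible elements of $\Znn$). Now I apply Theorem \ref{thm:egznn} a second time, to the sequence $y_1, \dots, y_{4m-3}$ of length $4m - 3$ in $(\ZZ/m\ZZ)^2$: it produces an index set $S$ with $|S| = m$ and $\sum_{i \in S} y_i \equiv 0 \pmod m$. The union of the corresponding $m$ blocks then has length $md = n$ and sum $d\sum_{i\in S} y_i \equiv 0 \pmod n$, giving the desired zero-sum subsequence and hence $(n \mid J) > 0$.

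I expect the only real obstacle to be this endgame count, namely guaranteeing that the reduction delivers at least $4m - 3$ blocks so that the final application of Theorem \ref{thm:egznn} goes through. Peeling via Theorem \ref{thm:egznn} alone stalls at $4m - 4$ blocks once $d \ge 4$ (because then $3d < 4d-3$), so routing the last block through Lemma \ref{lem:3n}, using the zero-sum hypothesis, is essential rather than cosmetic. As a consistency check, the extreme case $m = 1$ (that is, $d = n$, length $3n$) reduces immediately to Lemma \ref{lem:3n} itself.
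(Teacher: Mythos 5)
Your proposal is correct and follows essentially the same route as the paper's proof: peel off blocks of size $d$ that are zero-sum mod $d$ via Theorem \ref{thm:egznn}, use the zero-sum hypothesis together with Lemma \ref{lem:3n} to extract one final block from the last $3d$ elements, and then apply Theorem \ref{thm:egznn} once more to the $4(n/d)-3$ block sums in $(\ZZ/(n/d)\ZZ)^2$. Your write-up is in fact a bit more careful than the paper's on the bookkeeping (verifying each peel is legitimate, and why Lemma \ref{lem:3n} is needed when $d \ge 4$), but the underlying argument is identical.
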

\begin{proof}
	Note that $4n - d \ge 3m$. By Theorem
	\ref{thm:egznn}, we can break off subsequences of
	size $d$ with sum $0 \pmod d$ until we have only
	$3d$ elements remaining. Then by Lemma \ref{lem:3n} we can
	break off another $d$ elements, to obtain
	$4(n/d) - 3$ blocks of size $d$,
	with sums $dx_1, \dots, dx_{4(n/d) - 3}$
	for some $x_i$.
	By Theorem \ref{thm:egznn}, some $n/d$ of the $x_i$
	must sum to $0$ in $(\ZZ/(n/d)\ZZ)^2$.
	Combining the corresponding blocks gives a subsequence
	of length $n$ whose sum is zero in
	$\Znn$.
\end{proof}

The following corollary is clear from Proposition
\ref{prop:upperbound} and Theorem \ref{thm:egznn}.

\begin{corollary}
\label{cor:upperbound}
	Let $\ell$ be the smallest integer greater than or
	equal to $4$ such that
	$\ell \nmid n$.
	If $J$ is a zero-sum sequence in $\Znn$ of length at least	
	$4n - \ell + 1$, then $(n \mid  J) > 0$.
\end{corollary}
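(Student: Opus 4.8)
The plan is to reduce the statement entirely to the two facts already available, Proposition \ref{prop:upperbound} and Theorem \ref{thm:egznn}, by a short case analysis on the length $L := |J| \ge 4n - \ell + 1$. The only feature of $\ell$ that I expect to use is the following reformulation of its definition: since $\ell$ is the least integer $\ge 4$ with $\ell \nmid n$, every integer $d$ in the range $4 \le d \le \ell - 1$ must divide $n$. I would record this first, as it is exactly what lets me apply Proposition \ref{prop:upperbound} at the lengths $4n - d$ that arise.

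Next I would split into two ranges. If $L \ge 4n - 3$, then selecting any $4n - 3$ terms of $J$ produces a sequence in $\Znn$ of length exactly $4n - 3$, to which Theorem \ref{thm:egznn} applies directly; the zero-sum subsequence of length $n$ it produces is a subsequence of $J$, so $(n \mid J) > 0$. (Note that in this range the zero-sum hypothesis on $J$ is not even needed.) If instead $4n - \ell + 1 \le L \le 4n - 4$, I would set $d := 4n - L$, so that the two inequalities on $L$ translate into $4 \le d \le \ell - 1$; by the first step $d \mid n$. Then $J$ is itself a zero-sum sequence of length exactly $4n - d$ with $d \mid n$, and Proposition \ref{prop:upperbound} yields $(n \mid J) > 0$.

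There is no genuine obstacle here beyond the bookkeeping of the case split; the one point I would check with care is the boundary value $\ell = 4$, in which the second range is empty and the hypothesis degenerates to $L \ge 4n - 3$, so that the first case alone suffices. Since $\ell \ge 4$ always holds by definition, the two cases together cover every admissible length $L \ge 4n - \ell + 1$, which completes the argument.
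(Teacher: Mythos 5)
Your proposal is correct and is exactly the argument the paper has in mind: the paper dismisses this corollary as ``clear from Proposition \ref{prop:upperbound} and Theorem \ref{thm:egznn},'' and your case split (length $\ge 4n-3$ handled by Theorem \ref{thm:egznn}, length $4n-d$ with $4 \le d \le \ell - 1$ handled by Proposition \ref{prop:upperbound} since minimality of $\ell$ forces $d \mid n$) is precisely the bookkeeping that justifies that claim. No difference in approach --- you have simply written out the details the paper leaves implicit.
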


\begin{proposition}
\label{prop:constructionnn}
	Suppose $4 \le \ell \nmid n$. There exists
	a zero-sum sequence in $\Znn$ of length $4n - \ell$
	which contains no zero-sum subsequences of length $n$.
\end{proposition}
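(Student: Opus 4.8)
The plan is to mimic the cyclic construction of Proposition \ref{prop:constructionn}: place the sequence on the four ``corners'' $(0,0),(1,0),(0,1),(1,1)$ with suitable multiplicities $a,b,c,d$, and then translate every term by a fixed vector $v$. Translation is harmless: adding $v$ to each of the $4n-\ell$ terms increases the sum of any length-$n$ subsequence by $nv = 0$, so it neither creates nor destroys zero-sum subsequences of length $n$; meanwhile it shifts the total sum by $(4n-\ell)v \equiv -\ell v \pmod n$, and this is the freedom I will exploit to make the whole sequence zero-sum.

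First I would verify that any corner family with all multiplicities at most $n-1$ has no zero-sum subsequence of length $n$. A subsequence taking $i,j,k,m$ copies of $(0,0),(1,0),(0,1),(1,1)$ has sum $(j+m,\,k+m)$ and length $i+j+k+m$; if it is zero-sum of length $n$, then $j+m \equiv k+m \equiv 0 \pmod n$, and since $0 \le j+m,\,k+m \le 2n-2$, each of $j+m$ and $k+m$ equals $0$ or $n$. The four resulting cases are eliminated one at a time using $i,j,k,m \le n-1$ together with $i+j+k+m = n$ (for instance $j+m = k+m = n$ forces $j=k=n-m$ and hence $i = m-n < 0$). By the previous paragraph, this property passes to every translate.

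It then remains to choose $a,b,c,d \in \{0,\dots,n-1\}$ with $a+b+c+d = 4n-\ell$ whose total sum $(b+d,\,c+d)$ can be translated to $0$. As $v$ varies, the total sum sweeps out the coset $(b+d,\,c+d) + g\,(\ZZ/n\ZZ)^2$ with $g = \gcd(\ell,n)$, so a zero total sum is attainable exactly when $g \mid b+d$ and $g \mid c+d$. Writing each multiplicity as $n-1$ minus a nonnegative deficit turns these conditions into two congruences modulo $g$ on the deficits, which must together sum to $\ell-4$. The key arithmetic fact is that $\ell \nmid n$ forces $g < \ell$, and since $g \mid \ell$ this gives $\ell \ge 2g$; this inequality is exactly the room needed to realize the required residues (e.g. keep $d = n-1$, take $b=c$ with $b \equiv -d \pmod g$, and absorb the remaining deficit into $a$) while keeping every multiplicity inside $[0,n-1]$.

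The main obstacle is precisely this last bookkeeping: one must hit the exact length $4n-\ell$, respect all the range constraints so that Step 1 still applies, and meet both divisibility conditions modulo $g$ simultaneously. Naively shrinking a single corner does not work in general, since the resulting coordinate sums need not be divisible by $g$; it is the inequality $\ell \ge 2g$, a direct consequence of $\ell \nmid n$, that guarantees a valid allocation of the deficit always exists. The finitely many degenerate cases ($n \le 2$, where $\ell = 4$) I would dispatch by direct inspection.
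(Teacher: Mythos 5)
Your proposal is correct and follows essentially the same route as the paper: the same four-corner sequence with multiplicities at most $n-1$, the same translation trick, the same reduction to the divisibility conditions $g \mid b+d$ and $g \mid c+d$ with $g = \gcd(n,\ell)$, and the same key inequality $\ell \ge 2g$ coming from $\ell \nmid n$. Your deficit bookkeeping (keeping $d = n-1$ and pushing the slack into $a$) is just a mild repackaging of the paper's explicit choice of $a,b,c,d$, and it even avoids the paper's separate $g=1$ case.
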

\begin{proof}
	First, consider a sequence of the form
	\begin{align*}
		(0,0)\quad & a \le n - 1 \\
		(0,1)\quad & b \le n - 1\\
		(1,0)\quad & c \le n - 1\\
		(1,1)\quad & d \le n - 1,
	\end{align*}
	where $a$ denotes the number of $(0,0)$'s, etc.,
	and $a + b + c + d = 4n - \ell$.
	It is easy to check that this sequence contains no
	zero-sum subsequence of length $n$.
	Now, we claim that there exists
	$(r,s) \in \Znn$ such that adding $(r,s)$ to each
	term of the above sequence will result in a zero-sum
	sequence. Note that adding $(r,s)$ to each term
	does not change the fact that there is no zero-sum
	subsequence of length $n$.

	In fact, all we need is 
	\[
		g \coloneqq \gcd(n, \ell) \mid c + d,
		b + d.
	\]
	We claim that the following	$a, b, c, d$ work.
	\begin{align*}
		a &= n - \ell + g + 1 \text{ (or } n - \ell + 2g + 1
		\text{ if } g = 1) \\
		b &= n - 1\\
		c &= n - 1 \\ 
		d &= n - g + 1 \text{ (or } n - 2g + 1 \text{ if } g = 1) 
		.
	\end{align*}
	Note that $g \le \ell/2$ because $\ell \nmid n$, so
	$a \le n - \ell/2 + 1 \le n - 1$ if $g \neq 1$,
	and $a = n - \ell + 3 \le n - 1$ if $g = 1$.
	It is easy to show that we always have $a, d \ge 0$
	and $d \le n - 1$, and that these $a, b, c, d$
	satisfy the divisibility relation.
\end{proof}

Now, Corollary \ref{cor:upperbound} and Proposition
\ref{prop:constructionnn} imply Theorem \ref{thm:square}.

\section{Open problems}
Harborth \cite{harborth1973ein} first considered the problem of computing
$s_n((\ZZ/n\ZZ)^d)$ for higher dimensions. He proved 
the following bounds.

\begin{theorem}[Harborth \cite{harborth1973ein}]
\label{thm:harborth}
	We have
	\[
		(n-1)2^d + 1 \le s_n((\ZZ/n\ZZ)^d) \le (n-1)n^d + 1.
	\]
\end{theorem}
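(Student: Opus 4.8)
The plan is to establish the two inequalities separately: the upper bound by a direct pigeonhole argument, and the lower bound by exhibiting an explicit extremal sequence. Throughout I use that $\exp(\Znd) = n$, so that a zero-sum subsequence of length $n$ is exactly what the constant $s_n(\Znd)$ forbids avoiding.

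For the upper bound $s_n(\Znd) \le (n-1)n^d + 1$, I would argue that any sequence $J$ of length $(n-1)n^d + 1$ already contains a zero-sum subsequence of length $n$. The group $\Znd$ has exactly $n^d$ elements, so if every element of $\Znd$ occurred at most $n-1$ times in $J$, then $J$ would have length at most $(n-1)n^d < (n-1)n^d + 1$. Hence some element $v \in \Znd$ occurs at least $n$ times. Selecting $n$ copies of $v$ gives a subsequence of length $n$ whose sum is $nv = 0$, since $n$ annihilates every element of $\Znd$. This produces the required zero-sum subsequence and yields the upper bound.

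For the lower bound $s_n(\Znd) \ge (n-1)2^d + 1$, I would construct a sequence of length $(n-1)2^d$ containing no zero-sum subsequence of length $n$. The natural candidate is the sequence in which each of the $2^d$ vectors of $\{0,1\}^d \subseteq \Znd$ appears with multiplicity exactly $n-1$; its length is $(n-1)2^d$. If this sequence avoids a zero-sum subsequence of length $n$, then $s_n(\Znd) > (n-1)2^d$, which is the desired bound.

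The only real obstacle is verifying that this construction admits no zero-sum subsequence of length $n$, and I would handle it coordinatewise. Suppose for contradiction that some $n$ of the selected vectors sum to $0$ in $\Znd$. Fix a coordinate $i$; each selected vector contributes either $0$ or $1$ there, so the $i$-th coordinate of the total sum is congruent mod $n$ to the number $k_i$ of selected vectors having a $1$ in position $i$, where $0 \le k_i \le n$. Zero-sum forces $k_i \equiv 0 \pmod n$, hence $k_i \in \{0, n\}$, meaning all $n$ selected vectors agree in coordinate $i$. Since this holds for every $i$, the $n$ selected vectors must all equal a single $v \in \{0,1\}^d$; but $v$ has multiplicity only $n-1$, so $n$ copies cannot be selected, a contradiction. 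This establishes the lower bound and, together with the upper bound, proves the theorem.
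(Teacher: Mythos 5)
Your proof is correct, but note that there is no internal proof to compare it against: the paper states Theorem \ref{thm:harborth} as a background result quoted from Harborth's 1973 paper, without proof. Your two arguments are precisely the classical ones behind the citation: the pigeonhole count showing that in any sequence of length $(n-1)n^d + 1$ over $\Znd$ some element $v$ occurs at least $n$ times, and $n$ copies of $v$ sum to $nv = 0$; and the extremal sequence taking each vector of $\{0,1\}^d$ with multiplicity $n-1$, where the coordinatewise count $k_i \equiv 0 \pmod n$ with $0 \le k_i \le n$ forces any zero-sum subsequence of length $n$ to consist of $n$ identical vectors, which the multiplicities forbid. Both steps are watertight, so your write-up is a complete, self-contained proof of the cited result.
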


For $d > 2$ the precise value of $s_n((\ZZ/n\ZZ)^d)$
is not known. See \cite{edel2007zero,elsholtz2004lower}
for some better lower bounds and
\cite{alon1993zero,meshulam1995subsets} for some
better upper bounds. In general the lower bound 
in Theorem \ref{thm:harborth} is not tight,
but Harborth showed that it is an equality
for $n = 2^k$ a power of $2$. 

\begin{conjecture}
	If $n = 2^k$ and $d \ge 1$, we have
	\[
		s_n'(\Znd) = 2^d n - \ell + 1,
	\]
	where $\ell$ is the smallest integer such that
	$\ell \ge 2^d$ and $\ell \nmid n$.
\end{conjecture}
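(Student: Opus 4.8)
The plan is to establish matching bounds, following the two-sided template of the $d\le 2$ cases. For the lower bound I would construct a zero-sum sequence of length $2^d n - \ell$ in $\Znd$ with no zero-sum subsequence of length $n$; for the upper bound I would show that every zero-sum sequence of length $2^d n - \ell + 1$ contains one. The crucial feature of $n = 2^k$ is that the ordinary EGZ constant is already pinned down: Harborth's equality in Theorem \ref{thm:harborth} gives $s_n(\Znd) = (n-1)2^d + 1 = 2^d n - 2^d + 1$ for every power of two $n$ and all $d$. This plays the role that Theorem \ref{thm:egznn} played when $d = 2$, and since $2$ is the only prime dividing $n$, every ``prime level'' computation will take place in $(\ZZ/2\ZZ)^d = \FF_2^d$.

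For the upper bound I would first generalize Lemma \ref{lem:3n} to the assertion: for $n = 2^k$, every zero-sum sequence of length $(2^d-1)n$ in $\Znd$ has a zero-sum subsequence of length $n$ (the case $d=2$ being Lemma \ref{lem:3n}, as $(2^2-1)n = 3n$). Granting this, the block-decomposition argument of Proposition \ref{prop:upperbound} carries over cleanly: for any $d' \mid n$, a zero-sum sequence of length $2^d n - d'$ can be split into $(n/d' - 1)2^d + 1$ blocks of size $d'$ each summing to zero mod $d'$ --- using Harborth's equality at level $d'$ while enough elements remain, and the generalized lemma to extract the final block from the last $(2^d-1)d'$ elements. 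The block-sums then form a sequence of the EGZ length $(n/d'-1)2^d + 1$ in $(\ZZ/(n/d')\ZZ)^d$, so some $n/d'$ of them sum to zero and the corresponding blocks assemble into a zero-sum subsequence of length $n$. Feeding in the minimal non-divisor $\ell$ finishes exactly as in Corollary \ref{cor:upperbound}: if $\ell > 2^d$ then $\ell - 1 \mid n$, and taking $d' = \ell - 1$ gives the bound $2^d n - \ell + 1$, while if $\ell = 2^d$ then $2^d \nmid n$ and Harborth's equality gives it directly.

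The main obstacle is the generalized Lemma \ref{lem:3n} itself, and it is severe for $d \ge 3$. Proving it by induction on $k$, one writes $n = 2m$ and extracts blocks of size $m$; after clearing the last batch via the induction hypothesis, this yields exactly $2^d$ block-sums in $\FF_2^d$, which is one short of the EGZ threshold $2^d + 1$. If two of these coincide, the two blocks combine to a zero-sum subsequence of length $2m = n$ and we are done. But if all $2^d$ block-sums are distinct they exhaust $\FF_2^d$, and then no two blocks combine to length $n$. When $d = 2$, Reiher's counting result (Lemma \ref{lem:por2p}) rescues this degenerate case: it forces a zero-sum combination of $2p = 4$ blocks of total length $2n$, whose complement in $J$ has length exactly $(2^2 - 3)n = n$. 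This complement trick is special to $d = 2$; for $d \ge 3$ the complement of a length-$2n$ subsequence has length $(2^d - 3)n \ne n$, so one needs a genuinely finer argument that manufactures a length-$n$ zero-sum subsequence directly from an all-distinct block configuration. Supplying this higher-dimensional analogue of the mod-$2$ Kemnitz--Reiher count is where I expect essentially all of the difficulty to reside.

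The lower bound is the tractable direction and should generalize Proposition \ref{prop:constructionnn} directly. I would use all $2^d$ vectors of $\{0,1\}^d \subseteq \Znd$, each with multiplicity at most $n-1$ and with total multiplicity $2^d n - \ell$; this is feasible precisely because $\ell \ge 2^d$. A Harborth-style coordinatewise argument rules out any zero-sum subsequence of length $n$: in each coordinate the chosen multiplicities would have to sum to $0$ or to $n$, forcing all selected vectors to agree in that coordinate, hence to be a single vector used $n$ times, which the multiplicity cap forbids. Finally, translating every term by a suitable $(r_1,\dots,r_d)$ makes the sequence globally zero-sum without creating any new length-$n$ zero-sum subsequence (translation shifts each length-$n$ subsequence sum by $n(r_1,\dots,r_d) \equiv 0$). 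Such a translate exists exactly when each coordinate sum is divisible by $g \coloneqq \gcd(n,\ell)$, and meeting these $d$ congruences is a finite bookkeeping problem parallel to the choice of $a,b,c,d$ in Proposition \ref{prop:constructionnn}.
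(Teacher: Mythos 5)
You should note at the outset that this statement is not a theorem of the paper at all: it appears in the ``Open problems'' section as a conjecture, and the authors prove nothing about it beyond the remark that an argument similar to the $\Znn$ case reduces it to the single case $n = 2^d$ (where $\ell = 2^d + 1$). So there is no paper proof to compare against, and --- as you say plainly yourself --- your proposal is not a proof either. The genuine gap is exactly where you locate it: the generalization of Lemma \ref{lem:3n}, namely that every zero-sum sequence of length $(2^d - 1)n$ in $\Znd$ (for $n = 2^k$) contains a zero-sum subsequence of length $n$. Your diagnosis of why the paper's proof of Lemma \ref{lem:3n} does not extend is accurate: the induction produces $2^d$ block-sums in $\FF_2^d$, one short of the EGZ threshold $2^d + 1$, and the $d = 2$ rescue via Lemma \ref{lem:por2p} exploits the coincidence that the complement of a zero-sum subsequence of length $2n$ inside a zero-sum sequence of length $3n$ has length exactly $n$; for $d \ge 3$ that complement has length $(2^d - 3)n \neq n$, and no higher-dimensional analogue of Reiher's counting lemma is available (indeed even the unmodified constant $s_n(\Znd)$ is unknown for $d \ge 3$). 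Worse, the missing lemma is not really a stepping stone: specialized to the critical case $n = 2^d$ its threshold $(2^d - 1)n = 2^d(n-1) = 2^d n - \ell + 1$ coincides with the conjectured value, so there the lemma \emph{is} the upper-bound half of the conjecture. Your plan correctly isolates the irreducible core of the problem, but it does not penetrate it.

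The parts of your proposal that do go through are sound, and essentially reprove the paper's reduction remark. The block-decomposition scheme (the analogue of Proposition \ref{prop:upperbound}, with Harborth's equality $s_n(\Znd) = (n-1)2^d + 1$ for powers of two playing the role of Theorem \ref{thm:egznn}) and the handling of the range of lengths as in Corollary \ref{cor:upperbound} are both correct modulo the missing lemma. The lower bound is correct, and even simpler than your bookkeeping suggests: for $k \ge d$ one has $\ell = 2^d + 1$, hence $g = \gcd(n, \ell) = 1$ and the coordinate congruences are vacuous, so one may take every vector of $\{0,1\}^d$ with multiplicity $n - 1$ except the zero vector with multiplicity $n - 2$ and then translate; for $k < d$ one has $\ell = 2^d$, and Harborth's own extremal sequence (all of $\{0,1\}^d$, each with multiplicity $n-1$) is already zero-sum because $n \mid 2^{d-1}(n-1)$.
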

By an argument similar to the $\Znn$ case, we can
reduce this conjecture to the case $n = 2^d$,
in which case $\ell = 2^d + 1$. We also have not 
determined the modified EGZ constants for $\Znn$
for subseqences of length greater than $n$.

\begin{prob}
	Compute $s_{nt}'(\Znn)$ for $t > 1$.
\end{prob}

The constant $s_n(\Zmn)$ is known to be $2m + 2n - 3$
for $m \mid n$ \cite[Theorem 5.8.3]{halter2006non}. 
\begin{prob}
	Compute $s_{nt}'(\Zmn)$ for $t \ge 1$ and $m \mid n$.
\end{prob}

\section{Acknowledgements}
This research was conducted at the University of Minnesota
Duluth REU and was supported by NSF / DMS grant 1650947 and 
NSA grant H98230-18-1-0010.
We would like to thank Joe Gallian for running the program.

\bibliography{duluth}
\bibliographystyle{acm}
\end{document}